\documentclass[11pt,a4paper]{article}
\usepackage{amsthm,amssymb,amsfonts,amsmath,lineno,mathtools,graphicx,tikz}

\include{graphs}

\textwidth=150mm \textheight=211mm  \oddsidemargin=0cm
\evensidemargin=0cm \topmargin=20mm  \headheight=0cm \headsep=0cm

\usepackage{epsfig,xspace,algorithmic,algorithm,verbatim}
\usepackage{color}

\newtheorem{theorem}{Theorem}
\newtheorem{claim}{Claim}
\newtheorem{proposition}{Proposition}
\newtheorem{corollary}{Corollary}
\newtheorem{definition}{Definition}

\newtheorem{observation}{Observation}

\newtheorem{question}{Question}

\begin{document}

\title{Distinguishing Tournaments with Small Label Classes}

\author{Antoni Lozano\thanks{Research Group on Combinatorics, Graph Theory, and Applications, 
Universitat Polit\`ecnica de Catalunya, Catalonia, {\tt antoni@lsi.upc.edu}.
Research supported by projects TIN2014-57226-P (APCOM) and MTM2014-600127-P.}}

\maketitle


\begin{abstract}\noindent
A {\em $d$-distinguishing vertex (arc) labeling} of a digraph is a vertex (arc) labeling using $d$ labels that is not preserved by any nontrivial automorphism. Let $\rho(T)$ ($\rho'(T)$) be the minimum size of a label class in a 2-distinguishing vertex (arc) labeling of a tournament $T$. Gluck's Theorem~\cite{G} implies that $\rho(T) \le \lfloor n/2 \rfloor$ for any tournament $T$ of order $n$. In this paper we construct a family of tournaments $\cal H$ such that $\rho(T) \ge \lfloor n/2 \rfloor$ for any order $n$ tournament in $\cal H$. Additionally, we prove that $\rho'(T) \le \lfloor 7n/36 \rfloor + 3$ for any tournament $T$ of order $n$ and $\rho'(T) \ge \lceil n/6 \rceil$ when $T \in {\cal H}$ and has order $n$. These results answer some open questions stated by Boutin~\cite{B1, B2}.
\end{abstract} 

\section{Introduction}\label{sec:int}

We follow the standard notation in graph theory. In particular, given a directed graph ({\em digraph} for short) $G$, $V(G)$ ($A(G)$) stands for its set of vertices (arcs) and $Aut(G)$ denotes the automorphism group of $G$. We refer to the identity automorphism in $Aut(G)$ as to the {\em trivial} automorphism. A tournament is a complete oriented graph, that is, a digraph $T$ for which for every $u,v \in V(T)$, either $uv \in A(T)$ or $vu \in A(T)$ but not both.

A {\em vertex (arc) labeling} of a digraph $G$ is a total function $\phi: V(G) \rightarrow L$ ($\phi: A(G) \rightarrow L$) which labels each vertex (arc) of $G$ with a label from the set $L$. Given a vertex labeling $\phi$ for a digraph $G$, we say that an automorphism $\sigma \in Aut(G)$ {\em preserves} $\phi$ if $\phi(\sigma(v)) = \phi(v)$ for every vertex $v \in V(G)$. Similarly, we say that $\sigma \in Aut(G)$ {\em preserves} an arc labeling $\phi$ if $\phi(uv) = \phi(\sigma(u)\sigma(v))$ for every arc $uv \in A(G)$. On the contrary, a vertex or arc labeling $\phi$ {\em breaks} an automorphism $\sigma \in Aut(G)$ if $\phi$ is not preserved by $\sigma$. A (vertex or arc) labeling $\phi$ of $G$ that breaks all nontrivial automorphisms in $Aut(G)$ is called {\em distinguishing} for $G$. Additionally, if $\phi$ uses $d$ labels, it is called $d$-distinguighing for $G$.

Albertson and Collins introduced the concept of {\em distinguishing number} in the seminal paper~\cite{AC1} as the instantiation of the idea of ``symmetry breaking'' in graphs. The {\em distin\-guishing number} $D(G)$ of a digraph $G$ is the least cardinal $d$ such that $G$ has a $d$-distinguishing vertex labeling. In recent years, this concept has been extended to the {\em distinguishing index} $D'(G)$, which is defined as the least cardinal $d$ such that $G$ has an $d$-distinghishing arc labeling. A {\em distinguishing vertex class (distinguishing arc class)} of $\phi$ in $G$ is any of the $d$ subsets of $V(G)$ ($A(G)$) having the same label under $\phi$. These notions have been studied in \cite{B1,B2,B3,BP,C,KPW,P}.

\medskip

With respect to tournaments, Albertson and Collins \cite{AC2} conjectured that every tournament $T$ satisfies $D(T) \le 2$. As Godsil observed in 2002~\cite{Go}, since tournaments have odd order automorphism groups, the conjecture follows from Gluck's Theorem (\cite{G}, see also the shorter and self-contained proof in \cite{M}). In the following statement of Gluck's Theorem, given a permutation group $G$ on $\Omega$, $S \subseteq \Omega$ is a regular subset of $G$ if the setwise stabilizer $\{ g \in G \mid Sg = S\}$ only contains the identity. Therefore, a regular subset plays a similar role to that of a 2-distinguishig vertex class.

\begin{theorem} (Gluck's Theorem, \cite{G,M}). Let G be a permutation group of odd order on a finite set $\Omega$. Then G has a regular subset in $\Omega$.
\end{theorem}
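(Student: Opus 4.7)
The plan is to find the regular subset via a module-theoretic reduction combined with the structural theory of odd-order groups. Without loss of generality I would assume $G$ acts faithfully on $\Omega$, since if the kernel of the action is nontrivial it fixes every subset and no regular subset can exist. I would then identify the power set $2^{\Omega}$ with the $\mathbb{F}_2$-vector space $V = \mathbb{F}_2^{\Omega}$, which is a faithful $\mathbb{F}_2[G]$-module via the permutation action, and recast the goal as finding a vector $v \in V$ whose $G$-stabilizer is trivial. Because $|G|$ is odd, $\gcd(|G|, 2) = 1$, so Maschke's theorem decomposes $V$ as a direct sum of irreducible $\mathbb{F}_2[G]$-submodules; moreover the Feit--Thompson odd order theorem ensures that $G$ is solvable, providing a normal series with elementary abelian quotients of odd prime order.

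The heart of the argument is then to prove, by induction on $|G|$, that every solvable group of odd order acting faithfully on an $\mathbb{F}_2$-vector space admits a regular orbit. The inductive step selects a minimal normal subgroup $N \trianglelefteq G$, which by solvability is elementary abelian of odd prime order $p$; it applies Clifford's theorem to split $V$ as a sum of $N$-isotypic components permuted by $G/N$; invokes the induction hypothesis to produce a regular orbit for $G/N$ acting on a suitable quotient module; and then lifts a $G/N$-regular vector to a $G$-regular vector in $V$, whose characteristic function is the desired regular subset of $\Omega$.

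The main obstacle, and the reason Gluck's theorem is not purely combinatorial, is the lifting step: one must rule out the possibility that every preimage of a $G/N$-regular vector is stabilized by some nontrivial element of $N$. This demands a careful arithmetic comparison between $|N|$ and the number of $N$-fixed vectors in each isotypic component, once again exploiting the odd-order hypothesis through the fact that every nontrivial orbit of an element of odd prime order has length at least $3$, so that the number of $N$-fixed vectors in a non-trivial component is a controlled fraction of the whole. A clean counting lemma at this step closes the induction and furnishes the regular vector, completing the proof.
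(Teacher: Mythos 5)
The paper does not prove this statement at all: it is quoted as Gluck's Theorem and attributed to \cite{G,M}, so there is no internal proof to compare against, and your proposal has to stand on its own. It does not. The fatal step is the ``heart of the argument'': the claim that every solvable group of odd order acting faithfully on an $\mathbb{F}_2$-vector space has a regular orbit on vectors. That statement is false. Take $V = \mathbb{F}_8$ viewed as $\mathbb{F}_2^3$ and let $G = \Gamma\mathrm{L}_1(8)$, the group of order $21$ generated by multiplication by $\mathbb{F}_8^{\times}$ and the Frobenius automorphism (a semidirect product $C_7 \rtimes C_3$, of odd order, acting faithfully and coprimely on $V$). For a nonzero $x \in \mathbb{F}_8$ and each Galois element $\sigma$, the pair $(x\sigma(x)^{-1},\sigma)$ stabilizes $x$, so every nonzero vector has stabilizer of order $3$ and no regular vector exists. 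This is precisely why the regular-orbit theorems in the literature for odd-order coprime actions are stated for $V \oplus V$ (Espuelas), not for $V$ itself. Your reduction discards the permutation-module structure of $\mathbb{F}_2^{\Omega}$ and keeps only ``faithful module,'' but at that level of generality the conclusion is no longer true, so no counting lemma at the lifting step can close the induction; the special features of permutation modules (a $G$-invariant basis, the specific fixed-point counts it forces) are exactly what Gluck's theorem exploits.

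Two smaller points. First, the ``without loss of generality faithful'' remark is empty: a permutation group on $\Omega$ is faithful by definition, and for a non-faithful action the statement would simply be false rather than reducible. Second, the actual known proofs do not follow your route: Gluck's original argument does use Feit--Thompson but reduces to primitive (affine) solvable permutation groups and analyzes those, while Matsuyama's proof, which the paper highlights as short and self-contained, is an elementary induction on the permutation action that avoids both Feit--Thompson and module theory. If you want to salvage a module-theoretic approach, you must carry the permutation structure through the induction (e.g., work with blocks/imprimitivity rather than arbitrary submodules), since the general module statement you rely on is unavailable.
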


Given a tournament $T$, Gluck's Theorem shows the existence of a regular subset $S \subseteq \Omega = V(T)$ for $Aut(T)$. Define a labeling $\phi$ that assigns label 1 to the vertices in $S$ and label 2 to the vertices in $V(T) \setminus S$. Now, the definition of regular subset implies that the only automorphism in $Aut(T)$ preserving labeling $\phi$ is the identity. Therefore, $\phi$ constitutes a 2-distinguishing vertex labeling of the vertices of $T$ and the following fact can be claimed.

\begin{corollary}\cite{Go}
If $T$ is a tournament, then $D(T) \le 2$.
\end{corollary}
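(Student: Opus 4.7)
The plan is to derive the corollary as a direct consequence of Gluck's Theorem applied to $Aut(T)$, following the outline already hinted at in the paragraph preceding the statement. The key preliminary observation is that for any tournament $T$, the group $Aut(T)$ has odd order. I would first establish this, since it is the hypothesis needed to invoke Gluck's Theorem: suppose $\sigma \in Aut(T)$ has even order, so some power of $\sigma$ is an involution $\tau$. Since $\tau$ is nontrivial, it moves some vertex $u$ to a distinct vertex $v = \tau(u)$, and then $\tau(v) = u$. By the tournament property, exactly one of $uv, vu$ is an arc; but $\tau$ sends this arc to the reverse arc, which does not exist in $T$, contradicting $\tau \in Aut(T)$. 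Hence $|Aut(T)|$ is odd.

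Next, I would apply Gluck's Theorem with $G = Aut(T)$ and $\Omega = V(T)$ to obtain a regular subset $S \subseteq V(T)$, meaning that the only $\sigma \in Aut(T)$ satisfying $\sigma(S) = S$ is the identity. Define the vertex labeling $\phi : V(T) \to \{1,2\}$ by $\phi(v) = 1$ if $v \in S$ and $\phi(v) = 2$ otherwise. This labeling uses exactly two labels.

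Finally, I would verify that $\phi$ is distinguishing. If $\sigma \in Aut(T)$ preserves $\phi$, then $\sigma$ maps the label-$1$ class to itself, i.e.\ $\sigma(S) = S$. By regularity of $S$, this forces $\sigma$ to be the identity. Thus no nontrivial automorphism of $T$ preserves $\phi$, so $\phi$ is a $2$-distinguishing vertex labeling and $D(T) \le 2$.

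The only real step requiring justification beyond quoting Gluck's Theorem is the odd-order property of $Aut(T)$, which I expect to be the main (but still minor) obstacle; the rest of the argument is a direct translation between the group-theoretic notion of a regular subset and the combinatorial notion of a distinguishing $2$-labeling.
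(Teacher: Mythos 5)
Your proposal is correct and follows essentially the same route as the paper: invoke Gluck's Theorem on the odd-order group $Aut(T)$ acting on $V(T)$, label the regular subset $S$ with one label and its complement with the other, and note that preserving the labeling forces setwise stabilization of $S$, hence triviality. The only addition is your explicit verification that $Aut(T)$ has odd order (no involution can reverse an arc), a fact the paper simply cites as Godsil's observation.
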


As an added consequence of Gluck's Theorem, we can observe that the distinguishing index of tournaments is also bounded by 2. Suppose $S$ is the regular subset, given by Gluck's Theorem, of the vertices of a tournament $T$. Clearly, vertices in $S$ can be singularized if the arcs lying inside $S$ are labeled with 1 and the rest are labeled with 2. This way, the orbit of a vertex in $S$ by any automorphism will lie inside $S$, and the previous arc labeling will be 2-distinguishing.

\begin{corollary}
If $T$ is a tournament, then $D'(T) \le 2$.
\end{corollary}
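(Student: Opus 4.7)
The plan is to mirror the argument that gave Corollary~1, but to encode the regular subset in the arcs rather than the vertices. First I would apply Gluck's Theorem to the odd-order group $Aut(T)$ acting on $V(T)$ to obtain a regular subset $S \subseteq V(T)$. Then I would define an arc labeling $\phi:A(T)\to\{1,2\}$ by setting $\phi(uv)=1$ whenever both $u,v\in S$ and $\phi(uv)=2$ otherwise. This uses only two labels by construction, so the real content is to show that $\phi$ is distinguishing.

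The main step is to verify that any $\sigma\in Aut(T)$ that preserves $\phi$ must stabilise $S$ setwise. I would argue as follows: take any two distinct $u,v\in S$; since $T$ is a tournament, exactly one of $uv$ or $vu$ is an arc, and by construction its label is $1$. Because $\sigma$ preserves $\phi$, the image arc is also of label $1$, so both $\sigma(u)$ and $\sigma(v)$ lie in $S$. Hence $\sigma(S)\subseteq S$, and since $\sigma$ is a bijection of the finite set $V(T)$, this forces $\sigma(S)=S$. Regularity of $S$ then forces $\sigma$ to be the identity, yielding $D'(T)\le 2$.

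The step I would expect to require the most care is the passage from "label-$1$ arcs are preserved" to "$S$ is set-stabilised by $\sigma$", since the label-$1$ class records membership in $S$ only through pairs rather than through individual vertices; in principle $\sigma$ could be imagined to permute the complement $V(T)\setminus S$ nontrivially. This worry is dispatched by the observation above that every ordered pair of vertices in $S$ is already joined by a label-$1$ arc, so $\sigma(S)\subseteq S$ is forced directly, after which Gluck's regularity property controls the entire action of $\sigma$ and not just its restriction to $S$.
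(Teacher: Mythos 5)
Your proposal is correct and takes essentially the same route as the paper: label the arcs with both endpoints in the Gluck regular subset $S$ with one label and all other arcs with the other, observe that a label-preserving automorphism must map $S$ into (hence onto) itself, and then invoke regularity of $S$ to conclude the automorphism is trivial; you merely spell out the setwise-stabilization step that the paper states in one line. The only caveat, which the paper's own proof also glosses over, is the degenerate case $|S|\le 1$, where there are no label-1 arcs and the labeling is constant; this is easily repaired by replacing $S$ with its complement $V(T)\setminus S$ (which has the same setwise stabilizer and is therefore also regular) when $|S|\le 1$ and $|V(T)|\ge 3$, and by noting that tournaments of order at most $2$ are rigid.
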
 

Some literature on the subject has focused on the minimum possible size of a distinguishing vertex class, which has been called {\em the cost of 2-distinguishing}. We define it here both for vertices and arcs. For a digraph $G$ such that $D(G) \le 2$, define $\rho(G)$ ($\rho'(G)$) as the minimum size of a distinguishing vertex (arc) class. 

The article is organized as follows. Section~\ref{sec:pre} defines and studies an infinite class $\cal H$ of tournaments that gives rise to lower bounds for $\rho$ and $\rho'$. In Section~\ref{sec:d_num} we prove  $\rho(T) \le \lfloor n/2 \rfloor$ for any tournament $T$ of order $n$ and show that this bound is exact for tournaments in $\cal H$. In Section~\ref{sec:d_ind} we show $\rho'(T) \le \lfloor 7n/36 \rfloor + 3$ for any tournament $T$ of order $n$ and prove a lower bound of $\lceil n/6 \rceil$ for tournaments of order $n$ in $\cal H$. Finally, some conclusions and open problems are discussed in Section~\ref{sec:con}.

\section{Class $\cal H$ and black and white labelings}\label{sec:pre}

We introduce here a class of tournaments that will be used in the succeeding sections to provide lower bounds for the cost of 2-distinguishing tournaments with vertices ($\rho$) and with arcs ($\rho'$).
 
By $\vec{C}_3$ we denote the directed triangle, that is, the tournament containing the vertices $x_1$, $x_2$, and $x_3$ and the arcs $x_1 x_2$, $x_2 x_3$, and $x_3 x_1$. 

\begin{definition}
The family ${\cal H} = \{ H_k \}_{k \ge 0}$ of tournaments is inductively defined as follows: 
\begin{itemize}
\item $H_0$ is a single vertex tournament and 
\item $H_k$, for $k > 0$, is the tournament consisting of a copy of $\vec{C}_3$ in which every vertex $x_i$ in $\vec{C}_3$ is substituted by a copy of $H_{k-1}$, called {\em tertian} $T_i$, and an arc $x_i x_j \in A(C_3)$ is substituted by all possible arcs from $T_i$ to $T_j$. 
\end{itemize}
\end{definition}

\begin{observation}
For any $k \ge 0$, $|V(H_k)| = 3^k$.
\end{observation}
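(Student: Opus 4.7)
The plan is a straightforward induction on $k$, directly following the inductive structure in the definition of the family $\mathcal{H}$.

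For the base case $k=0$, the definition states that $H_0$ is a single vertex tournament, so $|V(H_0)| = 1 = 3^0$, which settles it.

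For the inductive step, I would assume $|V(H_{k-1})| = 3^{k-1}$ for some $k \ge 1$. By the definition of $H_k$, it is obtained from $\vec{C}_3$ by substituting each of its three vertices $x_1, x_2, x_3$ with a disjoint copy of $H_{k-1}$ (the tertians $T_1, T_2, T_3$). The arc-substitution rule only adds arcs between distinct tertians and does not introduce new vertices, so $V(H_k) = V(T_1) \sqcup V(T_2) \sqcup V(T_3)$. Hence $|V(H_k)| = 3 \cdot |V(H_{k-1})| = 3 \cdot 3^{k-1} = 3^k$, completing the induction.

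There is no real obstacle here: the observation is essentially a direct unfolding of the recursive definition, and the only thing to be a little careful about is noting that the three tertians are vertex-disjoint (which is implicit in calling them separate copies of $H_{k-1}$) so that their vertex sets sum rather than merge.
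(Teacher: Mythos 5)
Your induction is correct and is exactly the argument the paper implicitly relies on: the observation is stated without proof as an immediate consequence of the recursive definition, and your unfolding of that definition (three vertex-disjoint tertians, each of size $3^{k-1}$) is the standard way to make it explicit. No issues.
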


A {\em module} in a tournament $T$ is a set $X$ of vertices such that each vertex in $V(T) \setminus X$ has a uniform relationship to all vertices in $X$, that is, for every vertex $v \in V(T) \setminus X$, either $uv \in A(T)$ for all $u \in X$ or $vu \in A(T)$ for all $u \in X$. Note that $T$ and sets $\{u\}$, where $u \in V(T)$, are modules. Furthermore, modularity is transitive: if $Y$ is a module in the subtournament $T[X]$ induced by module $X$, then $Y$ is a module in $T$.

According to the definition of $H_k$, each of its three tertians are modules. By transitivity of modularity we can make the following observation.

\begin{observation}\label{ob:disjoint}
For every $k \ge 1$, $H_k$ can be decomposed into $3^{k-1}$ pairwise disjoint modules isomorphic to $\vec{C}_3$.
\end{observation}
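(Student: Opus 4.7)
The plan is to prove the observation by straightforward induction on $k$, leveraging the recursive definition of $H_k$ together with the transitivity of modularity recalled just before the statement.

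For the base case $k=1$, note that $H_1$ is obtained by substituting each vertex of $\vec{C}_3$ by $H_0$ (a single vertex), so $H_1 \cong \vec{C}_3$. Then the single decomposition $\{V(H_1)\}$ already consists of $3^{0}=1$ module isomorphic to $\vec{C}_3$, and the claim holds.

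For the inductive step, assume $k \ge 2$ and that $H_{k-1}$ admits a partition into $3^{k-2}$ pairwise disjoint modules, each isomorphic to $\vec{C}_3$. By the definition of $H_k$, its vertex set is the disjoint union of the vertex sets of its three tertians $T_1, T_2, T_3$, each isomorphic to $H_{k-1}$. Applying the inductive hypothesis inside each $T_i$ yields, for each $i\in\{1,2,3\}$, a partition of $V(T_i)$ into $3^{k-2}$ pairwise disjoint sets, each inducing a copy of $\vec{C}_3$ and each being a module of $T_i$. Taking the union of these three partitions gives $3\cdot 3^{k-2}=3^{k-1}$ pairwise disjoint subsets of $V(H_k)$ that partition it and each induce a copy of $\vec{C}_3$.

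It remains only to verify that each such subset is a module of the whole tournament $H_k$, not merely of the tertian containing it. This is exactly where the transitivity of modularity recalled earlier is used: since each tertian $T_i$ is a module of $H_k$, and each subset in our partition of $V(T_i)$ is a module of the subtournament $H_k[V(T_i)]=T_i$, the subset is itself a module of $H_k$. I do not anticipate any real obstacle here; the only point that requires a moment of attention is making sure that the invocation of transitivity is justified in the induction step, which it is because the inductive hypothesis supplies the inner modularity and the definition of $H_k$ supplies the outer modularity.
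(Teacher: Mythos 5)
Your proof is correct and follows the same route the paper takes: the paper simply notes that the tertians are modules and invokes transitivity of modularity, and your induction on $k$ is just the explicit unfolding of that one-line argument. Nothing is missing; the base case $H_1 \cong \vec{C}_3$ and the transitivity step are exactly what is needed.
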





We also need the following property on how vertices in $H_k$ can move in an automorphism.

\begin{proposition}\label{pr:module_move}
Let $\sigma \in Aut(H_k)$ be an automorphism and let $T_1, T_2, T_3$ be the tertians of $H_k$. Then, any tertian is mapped by $\sigma$ into another tertian as a whole, that is, for any $u, v \in T_i$, $\sigma(u), \sigma(v) \in T_j$, for $1 \le i, j \le 3$.
\end{proposition}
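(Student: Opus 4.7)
The plan is to characterize the tertians as the modules of $H_k$ of size $3^{k-1}$, and then use the fact that automorphisms preserve the module structure and cardinalities.

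First I would verify the two easy ingredients. By the recursive construction, each tertian $T_i$ is a module in $H_k$: every vertex outside $T_i$ lies in some other tertian $T_j$, and by definition the entire $T_j$ is connected to $T_i$ by a uniform set of arcs (either $T_j \to T_i$ or $T_i \to T_j$). Moreover, if $M \subseteq V(H_k)$ is any module and $\sigma \in Aut(H_k)$, then $\sigma(M)$ is again a module of the same size, because $\sigma$ preserves adjacency and is a bijection.

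The main step is the following claim: \emph{The only modules of $H_k$ of cardinality $3^{k-1}$ are $T_1$, $T_2$, and $T_3$.} I would prove this by contradiction. Let $M$ be such a module, and suppose $M$ meets two distinct tertians, say $u \in M \cap T_1$ and $v \in M \cap T_2$. Up to relabeling, the arcs of the outer $\vec{C}_3$ are $T_1 \to T_2$, $T_2 \to T_3$, $T_3 \to T_1$. Take any $w \in T_3$: then $v \to w$ and $w \to u$, so $w$ fails to have a uniform relationship with $u$ and $v$. Since $M$ is a module, this forces $w \in M$. Hence $T_3 \subseteq M$, which together with $u,v \in M$ gives $|M| \ge 3^{k-1}+2$, contradicting $|M| = 3^{k-1}$. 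Therefore $M$ lies inside some single tertian $T_i$, and since $|M| = |T_i|$ we conclude $M = T_i$.

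Putting the pieces together, $\sigma(T_i)$ is a module of $H_k$ of cardinality $3^{k-1}$, so by the claim $\sigma(T_i) = T_j$ for some $j \in \{1,2,3\}$. Equivalently, for any $u, v \in T_i$, both $\sigma(u)$ and $\sigma(v)$ lie in the same tertian $T_j$, as required.

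I expect the main obstacle to be arguing cleanly that a module of size $3^{k-1}$ cannot straddle two tertians; the clean way is the three-line ``vertex in the third tertian would be distinguished'' argument above, which exploits the cyclic orientation of $\vec{C}_3$. Everything else (modularity of each $T_i$, and the fact that automorphisms send modules to modules of the same size) is essentially definitional.
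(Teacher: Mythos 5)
Your proof is correct, and it takes a different route from the one in the paper. The paper argues via a numerical invariant: it defines $D_T(x,y)$ as the number of vertices whose relationship to $x$ differs from their relationship to $y$, observes that $D_{H_k}(u,v) < 3^{k-1}$ when $u,v$ lie in the same tertian (only vertices of that tertian can distinguish them), while if $\sigma(u)$ and $\sigma(v)$ lay in different tertians then every vertex of the remaining tertian would distinguish them, giving $D \ge 3^{k-1}$; since automorphisms preserve adjacency, and hence this count, the images must stay in one tertian. You instead prove a structural statement: each tertian is a module, automorphisms send modules to modules of the same cardinality, and the tertians are the \emph{only} modules of cardinality $3^{k-1}$ --- where your ``straddling'' argument (a vertex $w$ of the third tertian satisfies $v \to w$ and $w \to u$, so $w$ would be forced into the module) is essentially the same combinatorial observation the paper quantifies with $D$. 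Both arguments are elementary and complete; yours yields a slightly stronger reusable fact (the classification of the size-$3^{k-1}$ modules of $H_k$, hence $\sigma(T_i)=T_j$ exactly), at the cost of the small preliminary lemma that automorphic images of modules are modules, whereas the paper's invariant argument works directly from adjacency preservation without invoking the module machinery. One small point to make explicit in your write-up: the ``up to relabeling'' step should note that the outer $\vec C_3$ is cyclically symmetric, so the case where $M$ meets, say, $T_1$ and $T_3$ is genuinely covered by the same computation.
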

\begin{proof}
For a tournament $T$ and two vertices $x, y \in V(T)$, define
\[ D_T(x,y) = |\{ z \in T \mid zx \in A(T) \Leftrightarrow yz \in A(T) \}|. \]
That is, $D_T(x,y)$ is the number of vertices in $T$ having different relationships with $x$ and $y$. Now, suppose $u, v$ belong to the same tertian $T_i$ of $H_k$. Clearly, since all vertices outside $T_i$ have the same relationship with $u$ and $v$, only vertices in $T_i$ can have a different relationship with $u$ and $v$ and, then, $D_{H_k}(u,v) < 3^{k-1}$. However, if $\sigma(u)$ and $\sigma(v)$ belong to different tertians for an automorphism $\sigma \in Aut(H_k)$, then all the vertices in the other tertian will have a different relationship with $u$ and $v$ and, then, $D_{\sigma(H_k)}(\sigma(u),\sigma(v)) \ge 3^{k-1}$. Since an automorphism should preserve adjacencies, $\sigma$ cannot be an automorphism in $Aut(H_k)$ as we supposed. This contradiction shows that $\sigma(u)$ and $\sigma(v)$ must belong to the same tertian, say $T_j$ ($j$ being not necessarily different from $i$).  
\end{proof}

We consider labelings that play an important role in Section~\ref{sec:d_num}. In the discussion about 2-distinguishing labelings, although vertex and arc labels formally belong to the set $\{0,1\}$, from this point on we refer to label 1 as {\em white} and to label 2 as {\em black}. 

\begin{definition}\label{def:H}
We define black labelings and white labelings for $H_k$ as follows:
\begin{itemize} 
\item For $H_0$, a {\em black} {\em (white)} labeling consists of labeling the unique vertex of $H_0$ black (white).
\item For $H_k$, $k > 0$, a {\em black} {\em (white)} labeling contains two copies of $H_{k-1}$ with a black (white) labeling and one copy of $H_{k-1}$ with a white (black) labeling.
\end{itemize}
\end{definition}

\begin{figure}[h]
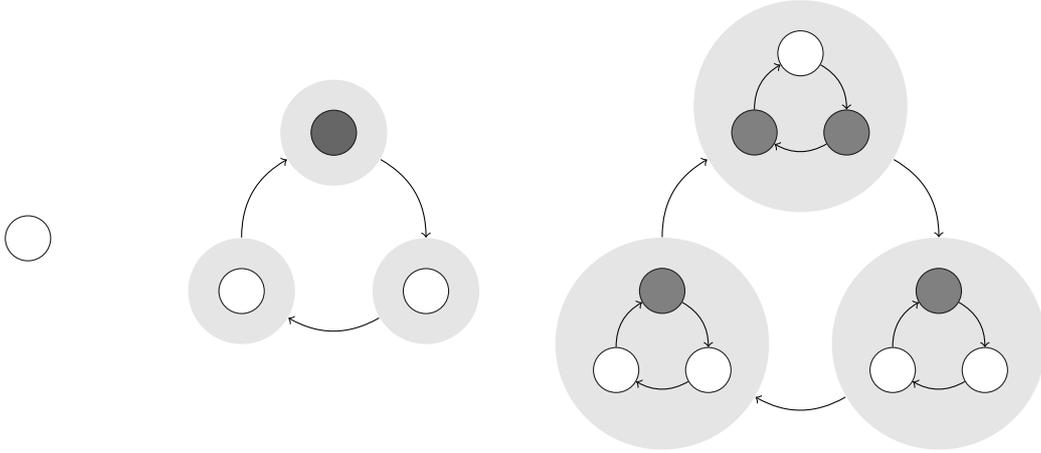

\whiteZERO \hglue 7mm \whiteONE \hglue 10mm \whiteTWO
\caption{From left to right, white labelings for tournaments $H_0$, $H_1$, and $H_2$. Tertians are shadowed in grey. One arc between two tertians implies all arcs between their respective nodes in the same direction.}
\end{figure}

\section{Small distinguishing vertex classes}\label{sec:d_num}

Just by observing that distinguishing vertex classes are closed by complementation, we obtain an upper bound for their size with the help of Gluck's theorem.

\begin{theorem}\label{th:dn}
For any tournament $T$ of order $n$, $\rho(T) \le \lfloor n/2 \rfloor$.
\end{theorem}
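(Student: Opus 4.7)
The plan is to invoke Gluck's Theorem directly and then exploit the symmetry between the two label classes of a 2-labeling. Since every tournament has an odd-order automorphism group (as noted in the discussion preceding the corollary of Gluck's Theorem), Gluck's Theorem yields a regular subset $S \subseteq V(T)$ for $Aut(T)$. Setting $\phi(v) = 1$ for $v \in S$ and $\phi(v) = 2$ for $v \in V(T) \setminus S$ gives a 2-distinguishing vertex labeling, as already observed in the text.

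The key extra observation I would make is that the roles of the two labels are symmetric: if $\sigma \in Aut(T)$ preserves $\phi$, then $\sigma(S) = S$, but equivalently $\sigma(V(T) \setminus S) = V(T) \setminus S$. In other words, the setwise stabilizer of $S$ equals the setwise stabilizer of $V(T) \setminus S$. Hence the labeling $\phi'$ obtained by swapping the two labels is also 2-distinguishing, and its label classes are exactly $V(T) \setminus S$ (class $1$) and $S$ (class $2$).

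Therefore, both $S$ and $V(T) \setminus S$ occur as a label class of some 2-distinguishing vertex labeling of $T$. Taking whichever of these two sets is smaller (or either, in case of a tie), we obtain a 2-distinguishing labeling whose smallest label class has size
\[
\min\{|S|,\,|V(T) \setminus S|\} \le \left\lfloor \tfrac{n}{2} \right\rfloor,
\]
which by definition bounds $\rho(T)$ from above. This yields the claim.

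There is essentially no obstacle: the entire content is the complementation trick applied to the regular subset provided by Gluck's Theorem. The only point that needs a brief sentence is the verification that being a regular subset is preserved under complementation, which is immediate from the definition of the setwise stabilizer.
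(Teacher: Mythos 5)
Your proof is correct and follows essentially the same route as the paper: take the regular subset $S$ from Gluck's Theorem, observe that its complement is equally distinguishing because the setwise stabilizers of $S$ and $V(T)\setminus S$ coincide, and use whichever of the two has size at most $\lfloor n/2 \rfloor$. The paper's own proof is exactly this complementation argument, stated more tersely.
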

\begin{proof}
Let $T$ be a tournament of order $n$ and let $S$ be a distinguishing vertex set given by Gluck's Theorem. Then, the set $V(T) \setminus S$ is also distinguishing and either $S$ or $V(T) \setminus S$ has size at most $\lfloor n/2 \rfloor$.
\end{proof}

The following proposition shows that the bound given in Theorem~\ref{th:dn} is optimal for the family ${\cal H} = \{H_k\}_{k \ge 0}$ of tournaments from Definition~\ref{def:H}.

\begin{proposition}\label{pr:lower_dn}
For every $k \ge 0$, $\rho(H_k) \ge \lfloor 3^k/2 \rfloor$.
\end{proposition}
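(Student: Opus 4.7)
The plan is to prove a strengthening of the statement by induction on $k$, adding an orbit-structure claim; the proposition falls out of the size part. Precisely, I aim to show for every $k \ge 1$ that: (i) every $2$-distinguishing vertex labeling of $H_k$ has label classes of sizes exactly $(3^k-1)/2$ and $(3^k+1)/2$; and (ii) any two $2$-distinguishing labelings of $H_k$ with the same color count lie in a single $Aut(H_k)$-orbit. The base case $k=1$ is the directed triangle $\vec{C}_3$: its $2$-distinguishing labelings are the six labelings using both colors, which split into two $Aut(\vec{C}_3) \cong \mathbb{Z}_3$-orbits, one per color count; both (i) and (ii) hold.

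For the inductive step, fix a $2$-distinguishing labeling $\ell$ of $H_k$ with tertians $T_1, T_2, T_3$, and write $\ell_i$ for the restriction of $\ell$ to $T_i$. First, each $\ell_i$ is itself $2$-distinguishing on $T_i \cong H_{k-1}$: if a non-identity $\sigma_i \in Aut(T_i)$ preserved $\ell_i$, then because arcs between tertians are uniform, extending $\sigma_i$ by the identity on the other two tertians would give, via Proposition~\ref{pr:module_move}, a non-trivial automorphism of $H_k$ preserving $\ell$, contradicting distinguishability. The inductive hypothesis~(i) then forces each tertian to have color counts $((3^{k-1}-1)/2,\,(3^{k-1}+1)/2)$.

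The crux is to rule out the two monochromatic-majority configurations. Let $a \in \{0,1,2,3\}$ denote the number of tertians in which white is the majority color. If $a = 0$, all three $\ell_i$ are $2$-distinguishing labelings of $H_{k-1}$ with the same color count, so the inductive hypothesis~(ii) supplies $\tau_i \in Aut(H_{k-1})$ with $\ell_{i+1} \circ \tau_i = \ell_i$ (indices mod~$3$). The map of $H_k$ sending each $v \in T_i$ to $\tau_i(v) \in T_{i+1}$ is then a non-trivial automorphism of $H_k$: it belongs to $Aut(H_k)$ by Proposition~\ref{pr:module_move} together with the wreath-product structure $Aut(H_k) = Aut(H_{k-1}) \wr \mathbb{Z}_3$, and by construction preserves $\ell$, a contradiction. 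The case $a = 3$ is symmetric. Hence $a \in \{1,2\}$, so the total number of white vertices is $(3^k-3)/2 + a \in \{(3^k-1)/2,\,(3^k+1)/2\}$, which gives (i) at level $k$ and in particular yields $\rho(H_k) \ge \lfloor 3^k/2 \rfloor$. Claim (ii) at level $k$ is then a cleanup: two labelings with the same color count share the value of $a$, a cyclic rotation aligns their unique odd-man-out tertians, and the inductive hypothesis~(ii) applied inside each tertian supplies the required element of $Aut(H_k)$.

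The central obstacle is exactly the step excluding $a \in \{0,3\}$: the color counts of the $\ell_i$ alone are not enough to produce a rotation of $H_k$ preserving $\ell$, which is why the induction must carry the orbit claim~(ii). Once~(ii) is available at level $k-1$, the forbidden rotation emerges by composing the isomorphisms it provides with a cyclic shift of the tertians.
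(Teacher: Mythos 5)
Your proof is correct, and it follows the same inductive skeleton as the paper's: induct on $k$, observe that the restriction of a distinguishing labeling to each tertian must itself be distinguishing, count, and kill the balanced configuration by exhibiting a label-preserving rotation of the tertians. The difference lies in the invariant carried through the induction. The paper strengthens the statement by showing that a labeling attaining exactly $\frac{3^k-1}{2}$ black (white) vertices must be one of the canonical white (black) labelings of Definition~\ref{def:H}, and then asserts that three tertians all carrying white labelings admit a nontrivial label-preserving rotation; that assertion tacitly uses that any two white labelings of $H_{k-1}$ are matched by an isomorphism of the tertians, which is exactly the content of your orbit claim (ii). Your version instead carries the exact class sizes $\frac{3^k\mp 1}{2}$ together with orbit-transitivity of equal-count distinguishing labelings; this is self-contained (it never needs Definition~\ref{def:H}) and makes the paper's implicit matching step explicit, at the price of the extra bookkeeping of re-proving (ii) at each level. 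Two small remarks: you do not need the full equality $Aut(H_k)=Aut(H_{k-1})\wr\mathbb{Z}_3$, which you invoke without proof --- it suffices to check directly that your tertian-shifting map preserves all arcs, which is immediate because the tertians are modules arranged in a $\vec{C}_3$; and the case $k=0$ of the proposition, excluded from your induction, is trivial since $\lfloor 3^0/2\rfloor=0$.
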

\begin{proof}
We use a more informative statement to prove the result.

\begin{claim}
For any 2-distinguishing vertex labeling $\phi$ of $H_k$:
\begin{enumerate}
\item $H_k$ has at least $\frac{3^k-1}{2}$ black vertices in $\phi$.
\item If $H_k$ has exactly $\frac{3^k-1}{2}$ black (white) vertices in $h$, then $h$ is a white (black) labeling for $H_k$.
\end{enumerate}
\end{claim}
\begin{proof}
We proceed by induction on $k$. If $k = 0$, both points 1 and 2 are trivially true. Suppose then that $k > 0$ and let $\phi$ be a 2-distinguishing vertex labeling for $H_k$. Then, since any automorphism in one of the three subtournaments of $H_k$ isomorphic to $H_{k-1}$ is also an automorphism of $H_k$, $\phi$ must be distinghishing for all three copies of $H_{k-1}$. By induction hypothesis, point 1 implies that any of the copies of $H_{k-1}$ must have at least $\frac{3^{k-1}-1}{2}$ black vertices in $h$. Therefore, $H_k$ must have at least 
\[ 3 \cdot \frac{3^{k-1}-1}{2} = \frac{3^k-3}{2} = \frac{3^k-1}{2}-1\]
black vertices in $h$. If $H_k$ contains exactly $\frac{3^k-1}{2}-1$ black vertices, the three copies of $H_{k-1}$ must contain exactly $\frac{3^{k-1}-1}{2}$ black vertices and, by induction hypothesis, point 2 implies that the restriction of $\phi$ to any of the copies of $H_{k-1}$ is a white labeling for it. Then, there is a nontrivial automorphism of $H_k$ consisting of a rotation of its subtournaments which respects the labeling $\phi$, which is a contradiction with the asumption that $\phi$ is distinghishing for $H_k$. Therefore, $H_k$ must contain at least $\frac{3^k-1}{2}$ black vertices, as we wanted to show in point 1.

As for point 2, if $H_k$ contains exactly $\frac{3^k-1}{2}$ black (white) vertices in $h$, since $\frac{3^k-1}{2} = 3 \cdot \frac{3^{k-1}-1}{2} + 1$ and all three copies of $H_{k-1}$ have at least $\frac{3^{k-1}-1}{2}$ black (white) vertices in $h$, it follows that two of the copies must have exactly $\frac{3^{k-1}-1}{2}$ black (white) vertices while the third one must have $\frac{3^{k-1}-1}{2}+1 = \frac{3^{k-1}+1}{2}$ black (white) vertices and $3^k - \frac{3^{k-1}+1}{2} = \frac{3^{k-1}-1}{2}$ white (black) vertices. By induction hypothesis, point 2 implies that two of the copies of $H_{k-1}$ have a white (black) labeling while the third one has a black (white) labeling. Consequently, $h$ is a white (black) labeling for $H_k$.
\end{proof}

The previous claim implies that for any 2-distinguishing vertex labeling $\phi$ of $H_k$, there are at least $\frac{3^k-1}{2} = \lfloor 3^k/2 \rfloor$ black vertices. Therefore, $\rho(H_k) \ge \lfloor 3^k/2 \rfloor$.
\end{proof}

\begin{theorem}\label{th:exact_bound}
For every $k \ge 0$, there is a tournament of order $n = 3^k$ such that $\rho(T) = \lfloor n/2 \rfloor$.
\end{theorem}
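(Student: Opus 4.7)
The plan is immediate: take $T = H_k$, the tournament of order $n = 3^k$ defined in Definition~\ref{def:H}, and sandwich $\rho(H_k)$ between the general upper bound and the specific lower bound already established.

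First I would note that $\rho(H_k)$ is well defined, since $H_k$ is a tournament and thus $D(H_k) \le 2$ by the Corollary to Gluck's Theorem (the Godsil observation). Then the upper bound $\rho(H_k) \le \lfloor 3^k/2 \rfloor$ follows immediately from Theorem~\ref{th:dn} applied to $T = H_k$ with $n = 3^k$. The matching lower bound $\rho(H_k) \ge \lfloor 3^k/2 \rfloor$ is exactly the content of Proposition~\ref{pr:lower_dn}. Combining these two inequalities gives $\rho(H_k) = \lfloor 3^k/2 \rfloor = \lfloor n/2 \rfloor$, which is what needs to be shown.

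There is no real obstacle here, as all the work has been front-loaded into Theorem~\ref{th:dn} (upper bound, via Gluck) and Proposition~\ref{pr:lower_dn} (lower bound, via the inductive black/white labeling argument on the recursive structure of $H_k$). The only mild subtlety worth stating explicitly is that the lower bound proposition, as phrased, quantifies over all 2-distinguishing labelings of $H_k$; one should remark that such labelings do exist so the statement is not vacuous, and this is precisely guaranteed by the fact that tournaments admit $2$-distinguishing vertex labelings. The final proof is therefore just one or two sentences exhibiting $H_k$ as the required witness.
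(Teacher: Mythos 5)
Your proposal is correct and coincides with the paper's own proof: take $T = H_k$ and combine the upper bound from Theorem~\ref{th:dn} with the lower bound from Proposition~\ref{pr:lower_dn} to get $\rho(H_k) = \lfloor 3^k/2 \rfloor = \lfloor n/2 \rfloor$. The extra remark that $2$-distinguishing labelings of $H_k$ exist (via Gluck) is a harmless, sensible addition but not needed beyond what the paper already records.
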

\begin{proof}
Given $k \ge 0$, we take $T = H_k$, which has order $n = 3^k$.
From Theorem~\ref{th:dn} and Proposition~\ref{pr:lower_dn}, $\rho(T) = \lfloor 3^k/2 \rfloor = \lfloor n/2 \rfloor$.
\end{proof}

\section{Small distinguishing arc classes}\label{sec:d_ind}

To get un upper bound of the cost of 2-distinghishing tournaments with the arcs, we will use the concept of determining set. Given a digraph $G$, a subset $S \subseteq V(G)$ is a {\em determining set} of $G$ if for any $\varphi, \psi \in Aut(G)$ such that $\varphi(x) = \psi(x)$ for all $x \in S$, $\varphi = \psi$. Thus, the action of an automorphism on $S$ determines its action on $V(G)$. From the group theory perspective, the pointwise stabilizer of a determining set is trivial while the setwise stabilizer of a distinguishing set is trivial (and therefore, every distinguishing set is a deterimining set).

The {\em determining number} of a digraph $G$, denoted by $Det(G)$, is defined as the minimum size of a determining set for $G$. We will use Theorem 8 in \cite{L}. 

\begin{theorem}\label{th:det}\cite{L}
For every order $n$ tournament $T$, $Det(T) \le \lfloor n/3 \rfloor$.
\end{theorem}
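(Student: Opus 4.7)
My plan is to construct a determining set for $T$ greedily, exploiting the classical fact that the automorphism group of any tournament has odd order. Since every subgroup of $Aut(T)$ is then of odd order, every orbit on $V(T)$ has odd size, so a non-singleton orbit has size at least three. Starting with $S_0 = \emptyset$ and $G_0 = Aut(T)$, I would at step $i$ pick $v_i$ in a non-singleton orbit $O_i$ of the pointwise stabilizer $G_i$, form $S_{i+1} = S_i \cup \{v_i\}$, and update $G_{i+1} = (G_i)_{v_i}$. Since $[G_i : G_{i+1}] = |O_i| \ge 3$, the procedure terminates with some $S_k$ a determining set for $T$.

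To bound $|S_k| \le \lfloor n/3 \rfloor$, I would first reduce to the transitive case. Given the orbit decomposition $V(T) = O_1 \sqcup \cdots \sqcup O_r$ of $Aut(T)$ with $|O_j| = n_j$, the union of bases for the (transitive) action of $Aut(T)$ on each $O_j$ is again a base for the whole action: an automorphism fixing every point of such a union acts trivially on each $O_j$, hence trivially on $V(T)$, hence is the identity by faithfulness. Since $\sum_j \lfloor n_j/3 \rfloor \le \lfloor n/3 \rfloor$, it then suffices to prove that any faithful, odd-order, transitive permutation group on $m$ points admits a base of size at most $\lfloor m/3 \rfloor$.

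For the transitive case I would use strong induction on $m$. Picking any $v \in \Omega$, the stabilizer $G_v$ acts faithfully on $\Omega \setminus \{v\}$ with orbits of odd sizes $m'_1, \ldots, m'_s$ summing to the even number $m - 1$, and the inductive hypothesis gives a total base size of at most $1 + \sum_t \lfloor m'_t/3 \rfloor$. The main obstacle is that this naive bound can exceed $\lfloor m/3 \rfloor$: for instance, with the Frobenius group of order $21$ acting on $7$ points one has two $G_v$-orbits of size three, yielding naive bound three, even though a base of size $\lfloor 7/3 \rfloor = 2$ exists. The delicate step is to show that whenever the naive recursion overshoots, one can exploit coupling between $G_v$-orbits so that a single base element simultaneously kills the $G_v$-action on several coupled orbits of the same size. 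A parity/residue argument leveraging that the sizes $m'_t$ are odd and sum to the even number $m-1$ is what I expect ultimately makes the bound go through.
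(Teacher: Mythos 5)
The statement you are proving is not proved in this paper at all: it is imported verbatim from reference~\cite{L} (Theorem~8 there), so your attempt has to stand on its own, and as written it does not. Your setup is fine: the greedy construction of a base, the observation that all orbits of an odd-order group have odd size (hence non-singleton orbits have size at least $3$), and the reduction to the transitive case via the orbit decomposition together with $\sum_j \lfloor n_j/3 \rfloor \le \lfloor n/3 \rfloor$ are all correct (with the small caveat that the point stabilizer $G_v$ may act unfaithfully on its individual orbits, so the induction has to be phrased for induced, possibly unfaithful, transitive actions, i.e.\ for the quotient by the kernel on each orbit; this is fixable but should be said). The genuine gap is exactly where you place your hopes: the inductive step for a faithful transitive odd-order group on $m$ points. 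You yourself exhibit the failure of the naive recursion (the Frobenius group of order $21$ on $7$ points gives $1 + 1 + 1 = 3 > 2 = \lfloor 7/3 \rfloor$), and the proposed repair --- ``exploit coupling between $G_v$-orbits'' so that one base point kills several orbits at once, justified by an unspecified ``parity/residue argument'' --- is never formulated, let alone proved. There is no lemma identifying when such coupling is available, no invariant that is shown to decrease, and no argument ruling out repeated overshoot at successive levels of the recursion; so the core of the theorem is missing.

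It is also worth realizing that your transitive claim is not a simplification of the problem but essentially a restatement of it: any odd-order permutation group $G$ on $n$ points embeds in $Aut(T)$ for some tournament $T$ on the same vertex set (orient each $G$-orbit of unordered pairs consistently, which is possible precisely because no element of odd order can transpose a pair), and a base for $Aut(T)$ is a base for $G$; conversely the tournament statement follows from the group statement. So the ``delicate step'' you defer is the whole content of $Det(T) \le \lfloor n/3 \rfloor$, and either tournament-specific structure or a genuine group-theoretic lemma must be supplied where your sketch currently says ``I expect''. As it stands, the proposal is an outline with the decisive step unproved.
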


To get an upper bound for $\rho'(T)$, where $T$ is a tournament of order $n$, we start considering a determining set $S \subseteq V(T)$ that, according to Theorem~\ref{th:det}, can be selected with size bounded by $\lfloor n/3 \rfloor$. We can now singularize the vertices in $S$ by coloring some of the arcs in the subtournament of $T$ induced by $S$, $T[S]$. An easy way to do it is coloring in black the arcs of a Hamiltonian path in $T[S]$, and coloring the rest of the arcs in $T$ in white. This way, all the vertices in $S$ will be at a different distance (through the black arcs) from the beginning of the black path, and therefore, $S$ will be fixed pointwise, and $\rho'(T) \le \lfloor n/3 \rfloor -1$. However, we can push the upper bound down by combining determining sets with distinguishing sets.

\begin{theorem}\label{th:di}
For any order $n$ tournament $T$, $\rho'(T) \le \lfloor 7n/36 \rfloor + 3$.
\end{theorem}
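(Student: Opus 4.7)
My strategy is to combine the determining-set bound from Theorem~\ref{th:det} with an iterated use of Gluck's theorem, building the arc coloring in layers. First, by Theorem~\ref{th:det}, I fix a determining set $S \subseteq V(T)$ with $|S| \le \lfloor n/3 \rfloor$; it then suffices to produce an arc coloring under which every color-preserving automorphism of $T$ fixes $S$ pointwise, for then such an automorphism must be the identity. The naive implementation (coloring a Hamiltonian path of $T[S]$ black) already yields $\rho'(T) \le \lfloor n/3 \rfloor - 1$ as sketched in the text preceding the statement, and the task is to shave this bound further by not pinning all of $S$ directly.

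The key idea is to shrink what must be pinned. Since $T[S]$ is a tournament, $Aut(T[S])$ has odd order, and Gluck's theorem applied to $T[S]$ yields a regular subset $R \subseteq S$ which, after possibly replacing $R$ by $S \setminus R$, satisfies $|R| \le \lfloor |S|/2 \rfloor \le \lfloor n/6 \rfloor$. Regularity means that any automorphism of $T[S]$ which preserves $R$ setwise is trivial. Hence, if a color-preserving $\sigma \in Aut(T)$ can be forced to preserve both $S$ setwise and $R$ setwise, then $\sigma|_S$ is a well-defined element of $Aut(T[S])$ fixing $R$ setwise, so $\sigma|_S = \mathrm{id}$, whence $\sigma$ fixes $S$ pointwise and $\sigma = \mathrm{id}$ by the determining property.

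The arc coloring is built in two stages. Stage (a): color the arcs of a Hamiltonian path of $T[R]$ black; the rigidity of a directed path forces every color-preserving automorphism of $T$ to fix $R$ pointwise, at a cost of $|R|-1 \le \lfloor n/6 \rfloor - 1$ black arcs. Stage (b): add a small family of further black arcs that singles out $S$ from $V(T) \setminus S$, so that color-preserving automorphisms necessarily satisfy $\sigma(S)=S$. A natural implementation is a nested use of the same trick: apply Gluck's theorem once more to an auxiliary tournament of size at most $\lfloor n/18 \rfloor$ (for example $T[S \setminus R]$ or the subtournament induced on an appropriate subset of $V(T) \setminus S$), and use the resulting regular subset, together with a short Hamiltonian path, to give each vertex of $S$ a distinctive incident-arc signature that no vertex outside $S$ can mimic. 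Counting: stage (a) contributes $\le \lfloor n/6 \rfloor - 1$ arcs, and a careful implementation of stage (b) contributes at most $\lfloor n/36 \rfloor$ arcs plus a constant slack absorbing floor issues; summing gives at most $\lfloor n/6 \rfloor + \lfloor n/36 \rfloor + 3 \le \lfloor 7n/36 \rfloor + 3$.

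The main obstacle is stage (b). A crude marking of $S$ uses $\Theta(|S \setminus R|) = \Theta(n/6)$ arcs and wipes out the savings made in stage (a), so the whole improvement hinges on squeezing this down to $\Theta(n/36)$. This forces a second nested application of Gluck's theorem on a subtournament of order $\lesssim n/18$, and then a non-trivial check that the two layers of black arcs do not interfere: no color-preserving automorphism may exploit the combined structure to create a symmetry that swaps $R$ with vertices outside $R$, or swaps $S$ with vertices outside $S$. Getting the bookkeeping right in small cases, where the nested subtournaments are too small to apply Gluck cleanly, produces the additive constant $+3$ in the final bound.
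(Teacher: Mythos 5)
Your overall frame (determining set $S$ of size at most $\lfloor n/3\rfloor$, then Gluck applied to $T[S]$ to get a class $R$ with $|R|\le\lfloor n/6\rfloor$, then reduce everything to a color-preserving automorphism that must fix $S$ pointwise) matches the paper, but your two-stage implementation has a genuine gap at stage (b), and it is exactly the step the whole bound hinges on. You spend $|R|-1\approx n/6$ black arcs fixing $R$ pointwise with a Hamiltonian path, and then claim that $S$ can be made setwise invariant with only about $\lfloor n/36\rfloor$ further black arcs via a ``nested'' application of Gluck to an auxiliary tournament of size $\lfloor n/18\rfloor$. No such auxiliary tournament is identified ($T[S\setminus R]$ has size about $n/6$, not $n/18$), and more importantly the count cannot work as described: $\lfloor n/36\rfloor$ arcs touch at most $n/18$ vertices, while $|S\setminus R|$ can be about $n/6$, so most vertices of $S\setminus R$ are incident to no black arc at all. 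A color-preserving automorphism that fixes $R$ pointwise is only constrained, on such unmarked vertices, by their adjacency pattern towards marked or fixed vertices, and in an arbitrary tournament nothing prevents an unmarked vertex of $S\setminus R$ from sharing that pattern with a vertex of $V(T)\setminus S$ and being swapped with it; then the determining property of $S$ cannot be invoked. So stage (b) is not a bookkeeping issue absorbed by the ``$+3$'': it is an unproved (and, by this counting obstruction, apparently false in general) claim.

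The paper's proof avoids this trap by never fixing $R$ pointwise. Instead it only needs the \emph{partition} of $S$ into $R$ and $S\setminus R$ to be invariant: it pairs up the vertices of $S\setminus R$ and blackens one arc per pair, and groups the vertices of $R$ into triples with a black path of length $2$ per triple. This makes every vertex of $S$ incident to a black arc (so $S$ cannot mix with $V(T)\setminus S$, at no extra cost) and gives vertices of $R$ and of $S\setminus R$ distinguishable black-path signatures, so any color-preserving automorphism restricted to $T[S]$ preserves the $2$-labeling whose class $R$ is distinguishing by Theorem~\ref{th:dn}; hence it is trivial on $S$, and triviality on $T$ follows from the determining property. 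The cost is $|S\setminus R|/2+2|R|/3\le 7n/36$, with up to three extra arcs for the at most three leftover vertices from the pairing/tripling, which is where the $+3$ really comes from. If you want to rescue your version, you must either drop the Hamiltonian path on $R$ in favor of such cheap per-vertex signatures, or supply a genuinely new argument for marking $S$ setwise with $O(n/36)$ arcs; as written, the claimed total of $\lfloor 7n/36\rfloor+3$ is not established.
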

\begin{proof}
Let $T$ be a tournament of order $n$. Then, by Theorem~\ref{th:det}, there exists a determining set $S \subseteq V(T)$ such that $|S| \le \lfloor n/3 \rfloor$. Consider the subtournament of $T$ induced by $S$, $T[S]$. By Theorem~\ref{th:dn}, $\rho(T[S]) \le \lfloor |S|/2 \rfloor \le \lfloor n/6 \rfloor$ and, therefore there exists a distinguishing set $R \subseteq S$ that proves it.


We will label some of the arcs in $S$ in black in such a way that every vertex in $S$ will be the extreme of some black arc. In the first place, we select the vertices in $S \setminus R$ by pairs and label the arcs joining the extremes of the selected pairs in black. Then, we select the vertices in $R$ by triples and, for each triple, we label two of its arcs in black. Note that the vertices from $S \setminus R$ which are incident to a black arc cannot be exchanged in any automorphism with the vertices in $R$ which are also incident to a black arc. In case $|S \setminus R|$ is not even or $|R|$ is not a multiple of 3, the previous method of grouping the vertices may leave at most 3 vertices which are not the extremes of any black arc, a maximum of one in $S \setminus R$ and two in $R$. Call $U = \{u_i \mid 1 \le i \le 3 \}$ to this set. 
For every possible cardinality of $U$, we calculate how many additional black arcs are needed to avoid the exchange of vertices from $S \setminus R$ and $R$ (and viceversa) in a nontrivial automorphism:
\begin{itemize}
\item $|U| = 0$. In this case, represented in Figure~\ref{fig:con}, all the vertices in $S \setminus R$ and $R$ are joined by black paths according to the above method. To complete the labeling, label all the remaining arcs in $T$ in white. Now, note that no vertex $u$ in $S \setminus R$ can map to a vertex in $R$ in an automorphism $\varphi$ because $u$ is the extreme of a black path of length 2 while $\varphi(u)$ is either the extreme of a black path of length 3 or its middle point. Therefore, vertices from the two parts of the partition of $S$ cannot be exchanged in any automorphism. Since $R$ is a distinguishing set for $S$,  the whole of $S$ lacks a nontrivial automorphism after the labeling, that is, $S$ is rigid. Additionally note that every vertex in $S$ is the extreme of some black arc while all vertices in $V(T) \setminus S$ are only the extremes of white arcs; therefore, no automorphism can map a vertex in $S$ to a vertex in $V(T) \setminus S$. Since $S$ is a determining set for $T$, our labeling is distinguishing. As for the size of the black label class, observe that there is one black arc for every 2 vertices in $S \setminus R$ and two black arcs for every 3 vertices in $R$. Since we know that $|R| \le \lfloor |S|/2 \rfloor$, we have at most
\begin{equation}\label{eq1}
\frac{|S \setminus R|}{2} + \frac{2 |R|}{3} \le 
\frac{3 (|S \setminus R|+|R|)+|R|}{6} =
\frac{|S|}{2} + \frac{|R|}{6} \le 
\frac{\lfloor n/3 \rfloor}{2} + \frac{\lfloor n/6 \rfloor}{6} \le 
\Bigl\lfloor \frac{7n}{36} \Bigr\rfloor
\end{equation}
black arcs.

\begin{figure}[t]
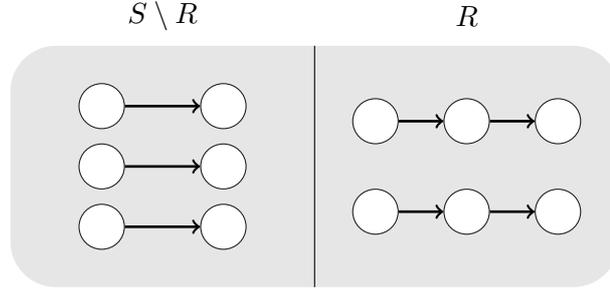
\label{fig:con}
\centerline{\conjunts}
\caption{Example of black arcs in $S$ for Theorem~\ref{th:di}, case $|U| = 0$.}
\end{figure}

\item $|U| = 1$. We complete our labeling by labeling an arc in $T[S]$ incident to $u_1$ in black in such a way that if we label the rest of the arcs in $T$ in white, all nontrivial automorphisms in $T[S]$ will be broken. To do so, consider the following subcases:
\begin{enumerate}
\item $R = \emptyset$. Then, $|S \setminus R| > 0$ and we label an arc from a vertex in $S \setminus R$ to $u_1$ in black.
\item $R \neq \emptyset$. Then, we label an arc from a vertex in $R$ to $u_1$ in black.
\end{enumerate}
Note that in both of the above subcases, $u_1$ is joined to a black path which is unlike the rest of black paths (the only one of length 2 in subcase 1, the only one of length 3 in subcase 2). Then, $u_1$ cannot be mapped to any other vertex in an automorphism in $T[S]$ and, similarly to the previous case ($|U| = 0$), we conclude that our labeling is distinguishing. As for the size of the black label class, we have 
\[ \Bigl\lfloor \frac{|S \setminus R|}{2}  \Bigr\rfloor + \Bigl\lfloor \frac{2 |R|}{3} \Bigr\rfloor + 1 \le
\Bigl\lfloor \frac{|S \setminus R|}{2} + \frac{2 |R|}{3} \Bigr\rfloor + 1 \le
\Bigl\lfloor \frac{7n}{36} \Bigr\rfloor + 1
\]
black arcs (where the last inequality derives from Equation~\ref{eq1}).

\item $|U| = 2$. Similarly to the previous case, we complete the labeling by labeling arcs in $T[S]$ incident to the vertices in $U$ in black, and labeling the rest of the arcs in $T$ in white. We consider two subcases:
\begin{enumerate}
\item $S \setminus R = \emptyset$. Then, we just label an arc joining $u_1$ to $u_2$ in black.
\item $S \setminus R \ne \emptyset$. Then, we label an arc from a vertex in $S \setminus R$ to $u_1$ and an arc joining $u_1$ to $u_2$ in black.
\end{enumerate}
Note that in both of the above subcases, $u_1$ and $u_2$ belong to a black path which is unlike the rest of black paths (the only one of length 1 in subcase 1, the only one of length 3 in subcase 2). Then, $u_1$ and $u_2$ cannot be mapped to any vertices in an automorphism in $T[S]$ and, similarly to the previous cases, we conclude that our labeling is distinguishing. The number of black arcs in our labeling can be obtained in a similar way to the previous case, being at most $\lfloor 7n/36 \rfloor + 2$ since here we may need to add two additional black arcs.

\item $|U| = 3$. Similarly to the two previous cases, we complete the labeling by labeling arcs in $T[S]$ incident to the vertices in $U$ in black, and labeling the rest of the arcs in $T$ in white. We consider two subcases:
\begin{enumerate}
\item $R = \emptyset$. Then, we label an arc joining $u_1$ to $u_2$ and an arc joining $u_2$ to $u_3$ in black. 
\item $R \neq \emptyset$. Then, as in the previous subcase, we color an arc joining $u_1$ to $u_2$ and an arc joining $u_2$ to $u_3$ in black. Additionally, we color an arc from a vertex in $R$ to $u_1$ in black.
\end{enumerate}
Note that in both of the above subcases, $u_1$, $u_2$, and $u_3$ belong to a black path which is unlike the rest of black paths (the only one of length 2 in subcase 1, the only one of length 4 in subcase 2). Then, $u_1$, $u_2$, and $u_3$ cannot be mapped to any vertices in an automorphism in $T[S]$ and, similarly to the previous cases, we conclude that our labeling is distinguishing. The number of black arcs in our labeling can be obtained in a similar way to the two previous cases, being at most $\lfloor 7n/36 \rfloor + 3$ since here we may need to add three additional black arcs.
\end{itemize}

Therefore, in all cases our labeling for $T$ is distinguishing and proves that $\rho'(T) \le \lfloor 7n/36 \rfloor + 3$.
\end{proof}

\medskip
We now show that the family $\{H_k\}_{k \ge 0}$ introduced in Section~\ref{sec:d_num} provides a lower bound for the distinguishing index of tournaments. We call {\em basic module} to any of the pairwise disjoint modules referred to in Observation~\ref{ob:disjoint}. Note that a nontrivial automorphism in any basic module trivially extends to $H_k$ as a consequence of the definition of module. This fact leads to the following lower bound for $\rho'(H_k)$.

\begin{proposition}\label{pr:lower_di}
For every $k \ge 1$, $\rho'(H_k) \ge \lceil 3^{k-1}/2 \rceil$.
\end{proposition}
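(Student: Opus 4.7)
My plan is a double-counting argument based on the $3^{k-1}$ basic modules of $H_k$.

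The setup is the following. By Observation~\ref{ob:disjoint}, $H_k$ decomposes into $3^{k-1}$ pairwise disjoint basic modules, each isomorphic to $\vec{C}_3$. For every basic module $B$, the cyclic rotation $\sigma_B$ of the three vertices of $B$ (fixing every vertex outside $B$) is a nontrivial automorphism of $H_k$, as noted right before the proposition. Let $E_B$ denote the set of arcs of $H_k$ incident to at least one vertex of $B$; these are precisely the arcs that $\sigma_B$ can move, since any arc disjoint from $B$ is fixed pointwise by $\sigma_B$.

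The first key step is to prove a necessary condition: if $\phi$ is a 2-distinguishing arc labeling of $H_k$, then $E_B$ contains at least one black arc, for every basic module $B$. I would argue by contrapositive: if every arc of $E_B$ were white, then the only arcs on which $\sigma_B$ acts nontrivially would all carry the same label, so $\phi$ would be preserved by $\sigma_B$, contradicting that $\phi$ is distinguishing. This avoids the subtler orbit analysis and is the cleanest way to extract a usable inequality.

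The second step is the double count. For a black arc $e = uv$, let $\mu(e)$ be the number of basic modules $B$ with $e \in E_B$. Since the basic modules partition $V(H_k)$, the endpoints $u, v$ lie in at most two distinct basic modules, so $\mu(e) \le 2$. If $b$ denotes the number of black arcs in $\phi$, summing the necessary condition over all basic modules gives
\[
3^{k-1} \;\le\; \sum_{B} \bigl|\{\text{black arcs in }E_B\}\bigr| \;=\; \sum_{e\ \text{black}} \mu(e) \;\le\; 2b,
\]
so $b \ge \lceil 3^{k-1}/2 \rceil$, which is the desired lower bound on $\rho'(H_k)$.

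The step I expect to be most delicate is the first one, since a priori one might only conclude that some $\sigma_B$-orbit of size $3$ is non-monochromatic, which is weaker-looking than "there exists a black arc in $E_B$." The contrapositive observation, however, shows that the apparently weaker statement suffices, and once this is in hand the proof reduces to the standard incidence double count above. No further structural information about $H_k$ (beyond the modular decomposition) is needed.
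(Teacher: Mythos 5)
Your proof is correct and follows essentially the same route as the paper: the paper likewise argues that every one of the $3^{k-1}$ basic modules must contain an endpoint of a black arc (since the $\vec{C}_3$-rotation of a module extends to an automorphism of $H_k$), and then counts that $\lfloor 3^{k-1}/2\rfloor$ arcs have at most $3^{k-1}-1$ endpoints, which is just a rephrasing of your double count $3^{k-1}\le 2b$. Your explicit contrapositive justification of the covering condition is a slightly more detailed write-up of the same idea.
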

\begin{proof}
Let $k \ge 1$. Since $\vec{C}_3$ is not rigid, all the $3^{k-1}$ basic modules of $H_k$ must contain an endpoint of some black arc if automorphisms in $H_k$ are to be broken. Since $3^{k-1}$ is odd, $\lfloor 3^{k-1}/2 \rfloor$ black arcs can have a maximum of $3^{k-1}-1$ endpoints, leaving at least one of the modules with a nontrivial automorphism. Therefore, $\rho'(H_k) \ge \lceil 3^{k-1}/2 \rceil$.
\end{proof}

We show that the bound $\lceil 3^{k-1}/2 \rceil$ is also un upper bound for the family of tournaments $\{ H_k \}_{k \ge 1}$.

\begin{proposition}\label{pr:upper_di}
For every $k \ge 1$, $\rho'(H_k) \le \lceil 3^{k-1}/2 \rceil$.
\end{proposition}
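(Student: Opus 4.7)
The plan is to prove the bound by induction on $k$, explicitly exhibiting a distinguishing arc labeling of $H_k$ with $f(k) := \lceil 3^{k-1}/2 \rceil$ arcs. The recurrence $f(k) = 3f(k-1) - 1$ shows that the naive attempt of placing a distinguishing labeling of $H_{k-1}$ inside each of the three tertians would require $3f(k-1) = f(k) + 1$ arcs, one arc too many, so the whole argument is organized around saving this one arc.

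To make the induction go through I will strengthen the statement and prove, simultaneously with Proposition~\ref{pr:upper_di}, the following auxiliary claim (B): for every vertex $w \in V(H_k)$, there exists an arc labeling of $H_k$ with $f(k) - 1$ black arcs such that the only automorphism of $H_k$ fixing $w$ and preserving the labeling is the identity. Since $H_k$ is vertex-transitive (an easy induction using the top-level cyclic rotation of its tertians), I may without loss of generality place the marked vertex $w$ in any specified tertian. The base case $k=1$ is immediate from $H_1 = \vec{C}_3$: one labeled arc suffices for the proposition, and any fixed vertex alone suffices for (B).

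For the inductive step, both constructions revolve around a single \emph{inter-tertian} black arc $u \to v$ (with $u, v$ lying in two different tertians), used as a dual-purpose device. Because this is the unique black arc crossing tertian boundaries, any nontrivial top-level rotation of $H_k$ would send it into a tertian pair carrying no black arc, and so is automatically broken. Once the top-level permutation is forced to be the identity, the endpoints $u, v$ are individually pinned down inside their tertians, exactly supplying the marked-vertex data required by (B). For the proposition's labeling on $H_k$, I apply the inductive (A)-labeling to the third tertian and the (B)-labeling to the two tertians containing $u,v$ (marked by $u$ and $v$), yielding $f(k-1) + 2(f(k-1)-1) + 1 = f(k)$ arcs. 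For the (B)-labeling on $H_k$ with external marked vertex $w$, I apply (B) inductively to all three tertians---using $w$ as the mark in the one containing it, and the two endpoints of the inter-tertian arc as marks for the other two---yielding $3(f(k-1)-1) + 1 = f(k)-1$ arcs.

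Verifying the distinguishing property in both cases is then a matter of invoking Proposition~\ref{pr:module_move}: any automorphism of $H_k$ permutes tertians; the unique black inter-tertian arc, together with the externally marked $w$ in the (B)-case, forces this permutation to be trivial and pins down that arc's endpoints; the inductive (A) or (B) hypothesis applied inside each tertian then rules out any nontrivial restriction. The main obstacle is identifying the right strengthening---a direct induction without the marked-vertex clause (B) cannot succeed, since simply removing one arc from one tertian's labeling would leave that tertian with nontrivial internal automorphisms that extend unconstrained to all of $H_k$, and no other mechanism in the labeling would rule them out.
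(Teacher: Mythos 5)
Your proof is correct, and at the level of the labeling it builds essentially the same object as the paper: a single black arc crossing between two tertians at each level of the recursion, which simultaneously kills the top-level rotation and saves the one arc needed to turn $3\lceil 3^{k-2}/2\rceil$ into $\lceil 3^{k-1}/2\rceil$. Where you genuinely diverge is in the inductive bookkeeping. The paper (Claim~\ref{cl:upper}) strengthens the induction with the invariant that the labeling contains a \emph{primitive} black arc, starts at $k=2$, and in the step deletes one primitive black arc from each of two tertians and inserts one cross arc, asserting rather tersely that the restriction to each tertian remains distinguishing after this surgery --- which is not literally true for the modified tertians unless one uses the fact that the endpoint of the new cross arc is pinned. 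Your pointed strengthening (B) --- a labeling with $\lceil 3^{k-1}/2\rceil - 1$ black arcs that is distinguishing among automorphisms fixing a prescribed vertex $w$ --- makes exactly that step rigorous: the endpoints of the unique cross arc serve as the marks, Proposition~\ref{pr:module_move} forces the tertian permutation to be trivial, and the induction then closes cleanly from the base case $k=1$ (one arc for (A), zero arcs for (B)). The counts $f(k-1)+2(f(k-1)-1)+1$ and $3(f(k-1)-1)+1$ with $f(k)=3f(k-1)-1$ are right. Your appeal to vertex-transitivity of $H_k$ is harmless but unnecessary, since your construction of the (B)-labeling works verbatim for an arbitrary marked vertex; so the two approaches buy the same bound, with yours trading the paper's ``primitive arc'' invariant for a marked-vertex lemma that is somewhat more robust to scrutiny.
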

\begin{proof}
We call {\em primitive} any arc whose endpoints belong to the same basic module in $H_k$. We now refine the statement by considering primitive black arcs. 

\begin{figure}[t]
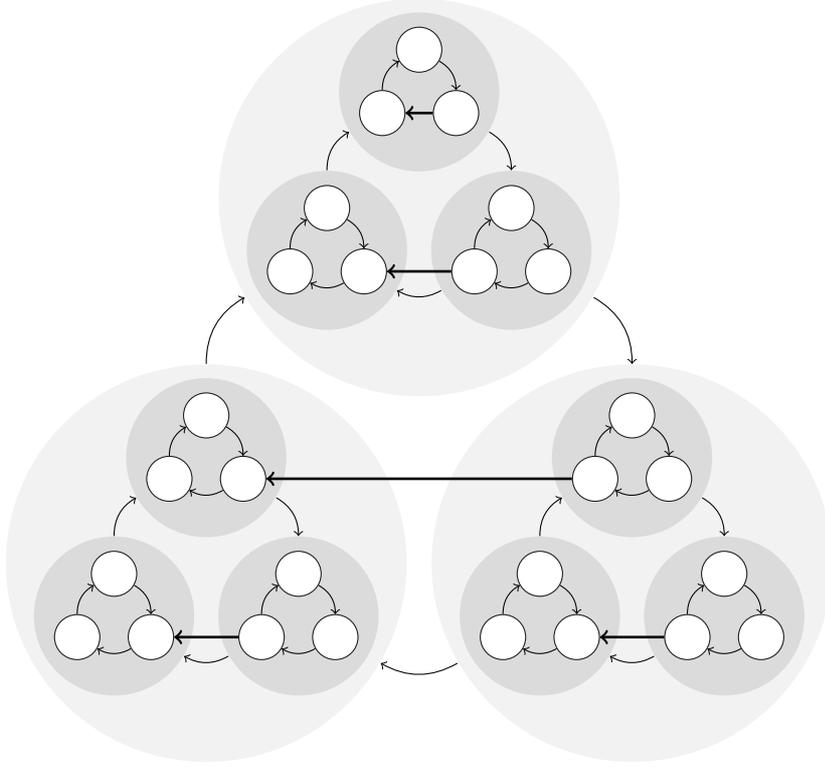
\label{fig:arc}
\centerline{\arclabelingtwo}
\caption{Arc labeling implied by Claim~\ref{cl:upper} for tournament $H_3$. The five straight thick arcs represent the only black arcs.}
\end{figure}

\begin{claim}\label{cl:upper}
For every $k \ge 2$, there is an 2-distinguishing arc labeling $\phi$ of $H_k$ with at most $\lceil 3^{k-1}/2 \rceil$ primitive black arcs.
\end{claim}
\begin{proof}
If $k = 2$, $H_k$ consists of three basic modules. Then, we define the labeling $\phi$ depicted in Figure~\ref{fig:arc} having a primitive black arc (the upper one in the figure) and a black arc going across the two remaining basic modues (the lower ones in the figure). Since each basic module contains vertices with unique properties and cannot be mapped into a different module in any automorphism, by Proposition~\ref{pr:module_move}, each tertian is mapped into itself. The fact that rotations inside the tertians are not possible either, $\phi$ is a 2-distinguishing arc labeling for $H_2$ satisfying the required conditions.

If $k > 2$, we know by induction hypothesis that $\lceil 3^{k-2}/2 \rceil$ black arcs are enough to break all nontrivial automorphisms in each of the three tertians. We also know that every tertian contains a primitive black arc. Consider now the labeling $\phi$ consisting in the union of the labelings given by induction hypothesis for the tertians with a single modification: we select two primitive black arcs from two tertians $T_i$ and $T_j$, $i \ne j$, we label the selected primitive black arcs white and then we label one  arc joining $T_i$ and $T_j$ black. There is still a primitive black arc in $H_k$ and, as a result of the relabeling, $\phi$ will have a maximum of
\[ 3 \Bigl\lceil \frac{3^{k-2}}{2} \Bigr\rceil - 2 + 1 = 3 \cdot \frac{3^{k-2} + 1}{2} - 1 
= \frac{3^{k-1} + 3 - 2}{2} = \Big\lceil \frac{3^{k-1}}{2} \Big\rceil. \]
black arcs as claimed. Furthermore, it is clear that labeling $\phi$ is 2-distinguishing for the tertians after the relabeling while, according to Proposition~\ref{pr:module_move}, an automorphism moving vertices between two different tertians would need to move all the vertices, but every tertian has properties different from the rest: for a first tertian (the upper one in the figure), there is no black arc connecting it to the other tertians, for a second tertian there is a black arc coming from outside (lower left), and for a third one there is a black arc going out (lower right). Therefore, $\phi$ is 2-distinguishing for $H_k$.
\end{proof}

Claim~\ref{cl:upper} proves the proposition for all $k \ge 2$. For $k = 1$, we can observe that tournament $H_1$ can be clearly made rigid by labeling one of its arcs in black and two of them in white. Therefore, for all $k \ge 1$, $\rho'(H_k) \le \lceil 3^{k-1}/2 \rceil$ as expected.  
\end{proof} 

We now get the following result.

\begin{theorem}
For every $k \ge 0$, there is a tournament $T$ of order $n = 3^k$ such that $\rho'(T) = \lceil n/6 \rceil$.
\end{theorem}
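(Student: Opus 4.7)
The plan is to read this off as an immediate corollary of the two preceding propositions, with only a small arithmetic verification to close the gap between the expressions $\lceil 3^{k-1}/2 \rceil$ and $\lceil n/6 \rceil$.

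Concretely, for $k \ge 1$ I would take $T = H_k$, which by the observation in Section~\ref{sec:pre} has order $n = 3^k$. Proposition~\ref{pr:lower_di} gives $\rho'(H_k) \ge \lceil 3^{k-1}/2 \rceil$, while Proposition~\ref{pr:upper_di} gives the matching upper bound $\rho'(H_k) \le \lceil 3^{k-1}/2 \rceil$. Combining the two yields the equality $\rho'(H_k) = \lceil 3^{k-1}/2 \rceil$.

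To finish, I would verify that $\lceil 3^{k-1}/2 \rceil = \lceil 3^k/6 \rceil = \lceil n/6 \rceil$. This is a one-line check: since $3^{k-1}$ is odd, $\lceil 3^{k-1}/2 \rceil = (3^{k-1}+1)/2$; on the other hand $3^k/6 = 3^{k-1}/2$, so $\lceil n/6 \rceil$ equals the same value $(3^{k-1}+1)/2$. The case $k = 0$ is a degenerate boundary case because $H_0$ has no arcs; I would either exclude it by noting that $\rho'$ on an edgeless tournament is vacuous, or observe that $H_0$ has trivial automorphism group, so the empty labeling is distinguishing and the statement holds under the natural convention.

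The main obstacle is essentially nonexistent: all of the real work has already been done in the two propositions, so the only step that requires any thought is the arithmetic identity relating $\lceil 3^{k-1}/2 \rceil$ to $\lceil n/6 \rceil$, which is immediate once one notes that $3^{k-1}$ is odd. Thus the proof is structured as a short existence argument: \emph{exhibit} $H_k$, \emph{invoke} the two propositions, \emph{simplify} the bound.
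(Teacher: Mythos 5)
Your proposal matches the paper's own proof essentially verbatim: take $T = H_k$, combine Propositions~\ref{pr:lower_di} and~\ref{pr:upper_di} to get $\rho'(H_k) = \lceil 3^{k-1}/2 \rceil$, and simplify to $\lceil n/6 \rceil$ using the oddness of $3^{k-1}$. The only difference is your explicit remark about the degenerate case $k=0$, which the paper also glosses over, so the two arguments are effectively identical.
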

\begin{proof}
Given $k \ge 0$, we take $T = H_k$, which has order $n = 3^k$.
From Proposition~\ref{pr:lower_di} and Proposition~\ref{pr:upper_di}, $\rho'(T) = \lceil 3^{k-1}/2 \rceil = \lceil 3^k/6 \rceil = \lceil n/6 \rceil$.
\end{proof}

\section{Conclusions and open questions}\label{sec:con}

In~\cite{B1}, Boutin proves that  $\rho(Q_n) = {\cal O}(Det(Q_n))$, where $Q_n$ is the hypercube of dimension $n$, and asks in Question 9 whether this is also the case of other graph families. In relation with this question, she asks in Problem 4 of~\cite{B2} for graphs $G$ such that $\rho(G)$ is arbitrarily larger than $Det(G)$. We consider tournaments $T$ or order $n$ belonging to the family $\cal H$. From Section~\ref{sec:d_num} we have that $\rho(T) = \lfloor n/2 \rfloor$. On the other hand, it is clear that $Det(T) \le \lfloor n/3 \rfloor$ because each of the $n/3$ basic modules (isomorphic to $\vec{C}_3$ according to Observation~\ref{ob:disjoint}) needs to have either one or two black vertices in order to break the rotations. By Theorem~\ref{th:det}, we finally have $Det(T) = \lfloor n/3 \rfloor$. Therefore, $\rho(T)$ and $Det(T)$, for any $T \in {\cal H}$, are related by a factor of $3/2$ and we can answer affirmatively to both questions. 

\medskip
We conclude with a couple of open questions.

\begin{question}
Can the bound in Theorem~\ref{th:di} be improved? In particular, is $\rho'(T) \le \lceil \frac{n}{6} \rceil$ for any tournament $T$ of order $n$?
\end{question}

\begin{question}
What are the smallest values of $\rho(T)$ and $\rho'(T)$ for a cyclic tournament $T$?
\end{question}

\end{document}